\newtheorem{theorem}{Theorem}[section]
\newtheorem{lemma}[theorem]{Lemma}
\newtheorem{proposition}[theorem]{Proposition}
\newtheorem{corollary}[theorem]{Corollary}
\newtheorem*{Theorem}{Theorem}
\newtheorem*{Question}{Question}
\theoremstyle{definition}
\theoremstyle{remark}
\numberwithin{equation}{section}
\newcommand{\R}{\ensuremath{\mathbb{R}}}
\newcommand{\N}{\ensuremath{\mathbb{N}}}
\newcommand{\f}{\infty}
\begin{document}

\title[The coincidence of R\'{e}nyi-Parry measures]{The coincidence of R\'{e}nyi-Parry measures for $\beta$-transformation}

\author[Y. Huang]{Yan Huang}
\address[Y. Huang]{College of Mathematics and Statistics, Chongqing University, Chongqing 401331, People's Republic of China.}
\email{yanhuangyh@126.com}

\author[Z. Wang]{Zhiqiang Wang}
\address[Z. Wang]{College of Mathematics and Statistics, Center of Mathematics, Key Laboratory of Nonlinear Analysis and its Applications (Ministry of Education), Chongqing University, Chongqing 401331, People's Republic of China.}
\email{zhiqiangwzy@163.com}

\date{\today}

\begin{abstract}
We present a complete characterization of two different non-integers with the same R\'{e}nyi-Parry measure.
We prove that for two non-integers $\beta_1 ,\beta_2 >1$, the R\'{e}nyi-Parry measures coincide if and only if $\beta_1$ is the root of equation $x^2-qx-p=0$, where $p,q\in\mathbb{N}$ with $p\leq q$, and $\beta_2 = \beta_1 + 1$, which confirms a conjecture of Bertrand-Mathis in \cite[Section III]{Bertrand-1998}.
\end{abstract}

\keywords{$\beta$-transformation, R{\'e}nyi-Parry measure}

\subjclass[2020]{28D05}

\maketitle

\section{Introduction}\label{sec:Introduction}
For $\beta >1$, the $\beta$-transformation $T_{\beta}:[0,1) \to[0,1)$ is defined by $$T_{\beta} (x) :=\beta x \pmod 1.$$
The non-integer $\beta$-transformations were first introduced by R\'{e}nyi in 1957 \cite{Renyi-1957}.
For any non-integer $\beta>1$, it's well-known that the Lebesgue measure is no longer $T_\beta$-invariant, but R\'{e}nyi \cite{Renyi-1957} proved that there exists a unique $T_\beta$-invariant Borel probability measure $\nu_\beta$ equivalent to Lebesgue measure.
Later, Parry \cite{Parry-1960} gave an explicit formula of the density function of $\nu_\beta$. Moreover, $\nu_\beta$ is the unique measure with maximal entropy $\log \beta$ \cite{Hofbauer-1978,Takahashi-1973}. The measure $\nu_\beta$ is called the \emph{R\'{e}nyi-Parry measure} for $\beta$-transformation.
For any integer $\beta >1$, it's clear that $\nu_\beta$ is the Lebesgue measure.

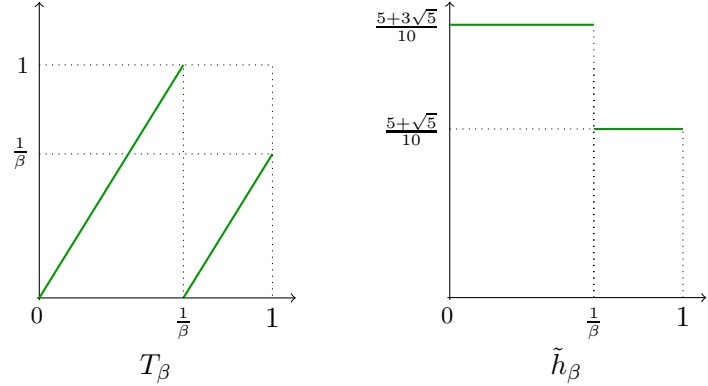
\begin{figure}[h]
\begin{tikzpicture}[scale=3.1]
\draw[->](-.01,0)node[below]{\footnotesize $0$}--(0.6180334,0)node[below]{\footnotesize $\frac{1}{\beta}$}--(1,0)node[below]{$1$}--(1.1,0);
\draw (0.5,-0.2)node[below]{$T_\beta$};
\draw[->](0,-.01)--(0,1)node[left]{\footnotesize $1$}--(0,1.27);\draw[dotted](0,0.6180334)node[left]{\footnotesize $\frac{1}{\beta}$}--(1,0.6180334);
\draw[thick, green!60!black] 
(0,0)--(0.6180334,1)(0.6180334,0)--(1,0.6180334);
\draw[dotted](0,1)--(1,1)--(1,0)(0.6180334,0)--(0.6180334,1);
\end{tikzpicture}
\quad\quad
\begin{tikzpicture}[scale=3.1]
\draw[->](-.01,0)node[below]{\footnotesize $0$}--(0.6180334,0)node[below]{\footnotesize $\frac{1}{\beta}$}--(1,0)node[below]{$1$}--(1.1,0);
\draw (0.5,-0.17)node[below]{$\tilde{h}_\beta$};
\draw[->](0,-.01)--(0,0.7236068)node[left]{\footnotesize $\frac{5+\sqrt 5}{10}$}--(0,1.1708204)node[left]{\footnotesize $\frac{5+3\sqrt 5}{10}$}--(0,1.27);
\draw[thick, green!60!black] 
(0,1.1708204)--(0.6180334,1.1708204)(0.6180334,0.7236068)--(1,0.7236068);
\draw[dotted](0.6180334,0)--(0.6180334,1)(0.6180334,0)--(0.6180334,1.1708204)(1,0)--(1,0.7236068)(0,0.7236068)--(0.6180334,0.7236068);

\end{tikzpicture}
{\caption{The $\beta$-transformation $T_{\beta}(x)$ and density function $\widetilde{h}_\beta(x)$ for $\beta=\frac{1+\sqrt 5}{2}$.}}
\label{f:1}
\end{figure}

To describe the density function of $\nu_\beta$, we define $T_\beta^0 (1) := 1$, $T_\beta^1 (1) :=\beta \pmod 1$, and inductively $T_\beta^n( 1 ):= T_\beta\big( T_\beta^{n-1} (1) \big)$ for $n \ge 2$.
Let
\begin{equation}\label{eq:nor-density}
\widetilde{h}_\beta(x) := \frac{1}{K_\beta}\sum_{ x < T_\beta^n (1)} \frac{1}{\beta^n},\quad  x \in [0,1),
\end{equation}
where the summation is over all $n \ge 0$ satisfying $T_\beta^n (1)>x$, and $K_\beta$ is the normalization constant given by
\begin{equation*}
K_\beta := \sum_{n=0}^{\f} \frac{T_\beta^n (1)}{\beta^n}.
\end{equation*}
Parry \cite{Parry-1960} showed that $\widetilde{h}_\beta(x)$ is the Radon-Nikodym derivative of $\nu_\beta$ with respect to Lebesgue measure.

As usual, a \emph{Pisot number} is an algebraic integer greater than $1$ whose algebraic conjugates are of modulus strictly less than $1$.
The \emph{degree} of a Pisot number is the degree of its minimal polynomial.
Two positive real numbers $a,b>0$ are said to be \emph{multiplicatively independent}, denoted by $a\nsim b$, if $\log a / \log b \notin \mathbb{Q}$.
Hochman and Shmerkin \cite{Hochman-Shmerkin-2015} proved the following measure rigidity result for $\beta$-trans\-for\-ma\-tions.
\begin{Theorem}\cite[Corollary 1.11]{Hochman-Shmerkin-2015}
  Let $\beta_1, \beta_2 > 1$ with $\beta_1\nsim \beta_2$ and $\beta_1$ a Pisot number.
  If $\mu$ is jointly invariant under $T_{\beta_1},T_{\beta_2}$, and if all ergodic components of $\mu$ under $T_{\beta_2}$ have positive entropy, then $\mu$ is the common R\'{e}nyi-Parry measure for $\beta_1$ and $\beta_2$; in particular, $\mu$ is absolutely continuous.
\end{Theorem}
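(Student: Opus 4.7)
My plan is to follow the measure-rigidity strategy of Hochman and Shmerkin, reducing the statement to a dimension-entropy rigidity problem for Pisot bases. The argument divides naturally into three stages, which I outline in order.

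First, I would decompose $\mu$ into $T_{\beta_2}$-ergodic components and reduce to the ergodic case. Since $T_{\beta_1}$ and $T_{\beta_2}$ do not literally commute for non-integer parameters, some care is needed, but a Fubini/conditional-measure argument applied to the $T_{\beta_1}$-invariance of $\mu$ shows that almost every component inherits enough additional structure (after replacing $T_{\beta_1}$ by an induced cocycle if necessary) to continue the analysis. By hypothesis each component has positive entropy under $T_{\beta_2}$, so it suffices to treat the case that $\mu$ is itself $T_{\beta_2}$-ergodic with $h_\mu(T_{\beta_2}) > 0$.

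Second, and this is the heart of the proof, I would analyse the magnification (scenery) process attached to $\mu$. The $T_{\beta_1}$-invariance yields a weak self-similarity of $\mu$ at the geometric scales $\beta_1^{-n}$, and the $T_{\beta_2}$-invariance does the same at the scales $\beta_2^{-n}$. Because $\beta_1 \nsim \beta_2$, the multiplicative semigroup generated by $\beta_1$ and $\beta_2$ is dense in $(0,\infty)$, so a CP-chain / local-entropy-average argument in the spirit of \cite{Hochman-Shmerkin-2015} forces the scenery distribution to be \emph{uniformly scaling} with maximal local dimension. Combined with positive entropy under $T_{\beta_2}$, this yields $\dim_H \mu = 1$.

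Third, I would invoke the Pisot property of $\beta_1$. For Pisot $\beta_1$ the $\beta_1$-shift is sofic, hence every $T_{\beta_1}$-invariant ergodic measure $\mu$ satisfies $h_\mu(T_{\beta_1}) = (\log \beta_1)\cdot \dim_H \mu$. Full dimension from the previous stage therefore gives $h_\mu(T_{\beta_1}) = \log \beta_1$, and the uniqueness of the measure of maximal entropy recalled after \eqref{eq:nor-density} forces $\mu = \nu_{\beta_1}$. In particular $\mu$ is equivalent to Lebesgue measure and $T_{\beta_2}$-invariant, so by R\'{e}nyi's uniqueness theorem it must equal $\nu_{\beta_2}$ as well, giving the common R\'{e}nyi--Parry measure. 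The principal obstacle is clearly the second stage: producing full Hausdorff dimension from joint invariance plus positive entropy. Without $\beta_1 \nsim \beta_2$ there are explicit counterexamples, and without positive entropy the dimension can be arbitrarily small, so the scenery / CP-chain machinery of \cite{Hochman-Shmerkin-2015} is indispensable there; by contrast, the first and third stages are comparatively standard once the symbolic structure of $\beta$-expansions for Pisot bases is available.
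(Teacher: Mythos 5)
There is nothing in the paper to compare your proposal against: this statement is not proved in the paper at all. It is quoted, with the explicit label \cite[Corollary 1.11]{Hochman-Shmerkin-2015}, as an external black box whose only role is to yield Corollary 1.3; the authors' own work (Propositions 2.1--2.5 and the proof of Theorem 1.1) concerns the density functions of R\'enyi--Parry measures and is logically independent of it. So the only meaningful comparison is with the source, and against that standard your text is an outline of the Hochman--Shmerkin strategy rather than a proof. The decisive step --- your second stage, where joint invariance, $\beta_1\nsim\beta_2$ and positive entropy are supposed to force $\dim_H\mu=1$ --- is not argued at all: you simply invoke ``the CP-chain / local-entropy-average machinery of \cite{Hochman-Shmerkin-2015}'', i.e.\ you delegate exactly the content of the theorem back to the paper being cited. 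That is a genuine gap, not a compressible routine step; everything else in your sketch is bookkeeping around it.

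Two further remarks on accuracy. First, the actual derivation of Corollary 1.11 in \cite{Hochman-Shmerkin-2015} does not proceed by establishing full dimension of $\mu$ and then appealing to uniqueness of the measure of maximal entropy; it combines their pointwise equidistribution theorem (for a $T_{\beta_1}$-invariant measure of positive dimension with $\beta_1$ Pisot and $\beta_1\nsim\beta_2$, almost every point equidistributes under $T_{\beta_2}$ for $\nu_{\beta_2}$) with the Birkhoff ergodic theorem applied to the $T_{\beta_2}$-ergodic components: almost every component is forced to equal $\nu_{\beta_2}$, hence $\mu=\nu_{\beta_2}$, and then absolute continuity together with R\'enyi's uniqueness theorem for $T_{\beta_1}$ gives $\mu=\nu_{\beta_1}$. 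Second, your third stage misplaces the role of the Pisot hypothesis: the identity $h_\mu(T_{\beta})=\dim_H\mu\cdot\log\beta$ for ergodic $T_\beta$-invariant measures holds for every $\beta>1$ (constant slope, hence Lyapunov exponent $\log\beta$), and does not rest on soficity of the $\beta$-shift; the Pisot property is what makes the scenery/CP-chain (equivalently, equidistribution) step work, i.e.\ it belongs to the stage you left unproved. Your first stage also glosses over a real issue --- the ergodic components of $\mu$ under $T_{\beta_2}$ need not be $T_{\beta_1}$-invariant, and ``replacing $T_{\beta_1}$ by an induced cocycle if necessary'' is not an argument --- which is precisely why the source works with equidistribution of typical points rather than with components individually.
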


After presenting the above result, Hochman and Shmerkin raised the following questions.
\begin{Question}
  For what pairs $(\beta_1,\beta_2)$ the R\'{e}nyi-Parry measures coincide, i.e.,  $\nu_{\beta_1} = \nu_{\beta_2}$?
\end{Question}

For two integers $\beta_1, \beta_2 > 1$, $\nu_{\beta_1} = \nu_{\beta_2}$ is the Lebesgue measure.
If $\beta_1> 1$ is an integer and $\beta_2>1$ is not, then $\nu_{\beta_1} \ne \nu_{\beta_2}$ because the Lebesgue measure is not $T_{\beta_2}$-invariant.
The situation becomes more complicated when $\beta_1, \beta_2 > 1$ are two non-integers, for which
Bertrand-Mathis has made a conjecture in \cite[Section III]{Bertrand-1998}.
The purpose of this paper is to give a complete characterization of two non-integers with the same R\'{e}nyi-Parry measure, which provides an affirmative answer to Bertrand-Mathis' conjecture.
\begin{theorem}\label{main-result}
For two different non-integers $\beta_1, \beta_2 > 1$, the R\'{e}nyi-Parry measures coincide if and only if $\beta_1$ is the root of equation $x^2-qx-p=0$, where $p,q\in\mathbb{N}$ with $p\leq q$, and $\beta_2 = \beta_1 + 1$.
In particular, if $\nu_{\beta_1} = \nu_{\beta_2}$, then $\beta_1$ and $\beta_2$ are two Pisot numbers of degree $2$.
\end{theorem}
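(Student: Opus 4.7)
The plan is to handle the two directions separately: sufficiency by direct calculation of the two densities, and necessity by extracting from the equality $\widetilde h_{\beta_1}=\widetilde h_{\beta_2}$ enough rigidity to determine the orbit of $1$. For sufficiency, assume $\beta_1^2=q\beta_1+p$ with $p,q\in\mathbb{N}$, $p\le q$, and $\beta_2=\beta_1+1$. The inequality $p\le q$ forces $\lfloor\beta_1\rfloor=q$, so $T_{\beta_1}(1)=p/\beta_1\in(0,1)$ and $T_{\beta_1}^2(1)=\beta_1\cdot(p/\beta_1)\pmod 1=0$, giving the finite orbit $1,p/\beta_1,0,0,\ldots$ Likewise $\lfloor\beta_2\rfloor=q+1$ yields $T_{\beta_2}(1)=p/\beta_1$, and the identity $\beta_2\cdot(p/\beta_1)=p+p/\beta_1$ shows $p/\beta_1$ is fixed by $T_{\beta_2}$, so its orbit is $1,p/\beta_1,p/\beta_1,\ldots$ Substituting these orbits into \eqref{eq:nor-density}, both normalization constants evaluate to $1+p/\beta_1^2$, and comparing values on the intervals $[0,p/\beta_1)$ and $[p/\beta_1,1)$---using $\sum_{n\ge 1}\beta_2^{-n}=1/(\beta_2-1)=1/\beta_1$---yields $\widetilde h_{\beta_1}=\widetilde h_{\beta_2}$.

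For necessity, I work with the atomic measure $\rho_\beta:=\sum_{n\ge 0}\beta^{-n}\delta_{T_\beta^n(1)}$ on $[0,1]$, for which \eqref{eq:nor-density} reads $\rho_\beta((x,1])=K_\beta\widetilde h_\beta(x)$ for $x\in[0,1)$. Equality of densities together with $\int_0^1\widetilde h_\beta\,dx=1$ forces $K_{\beta_1}=K_{\beta_2}$, and hence $\rho_{\beta_1}|_{(0,1]}=\rho_{\beta_2}|_{(0,1]}$. This immediately yields a common nonzero orbit set $O:=\{T_{\beta_i}^n(1):n\ge 0\}\cap(0,1]$ together with the mass matching $\sum_{n:T_{\beta_1}^n(1)=c}\beta_1^{-n}=\sum_{n:T_{\beta_2}^n(1)=c}\beta_2^{-n}$ for every $c\in O$. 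A short argument shows $O$ is finite: otherwise each orbit is injective and mass matching collapses to $\beta_1^{n_1(c)}=\beta_2^{n_2(c)}$ with $n_1,n_2\colon O\to\mathbb{Z}_{\ge 0}$ bijections, which forces $\beta_1=\beta_2$. Hence $O=\{c_1<\cdots<c_M=1\}$ is finite and both $\beta_i$ are Parry numbers.

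The main obstacle is to show $M=2$, after which a short computation recovers the quadratic. I plan a trichotomy by whether each orbit reaches $0$ (simple Parry) or enters a cycle in $O\setminus\{1\}$. If both orbits reach $0$, every intermediate point is visited exactly once in each orbit, so mass matching becomes $\beta_1^{i(c)}=\beta_2^{j(c)}$ for permutations $i,j$ of $\{1,\ldots,M-1\}$; the common multiplicative ratio must equal $1$, giving $\beta_1=\beta_2$. If neither orbit reaches $0$, the induced return maps on $\{c_1,\ldots,c_{M-1}\}$ leave only finitely many possible orbit types, and enforcing mass matching at all intermediate points together with $\beta_i>1$ rules out every configuration. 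The decisive mixed case, where exactly one orbit reaches $0$, requires combining mass matching with the dynamical relations $\beta_ic_k=c_{\pi_i(k)}+d_{i,k}$ (with $d_{i,k}\in\mathbb{Z}_{\ge 0}$) and $\beta_i=c_{\pi_i(M)}+\lfloor\beta_i\rfloor$; this eliminates all $M\ge 3$ configurations and leaves only $M=2$ with orbit $1,c_1,0,\ldots$ under $T_{\beta_1}$ and $1,c_1,c_1,\ldots$ under $T_{\beta_2}$. From here, $\beta_1-\lfloor\beta_1\rfloor=c_1=\beta_2-\lfloor\beta_2\rfloor$ and $\beta_1c_1=(\beta_2-1)c_1\in\mathbb{N}$ give, setting $q:=\lfloor\beta_1\rfloor$ and $p:=\beta_1c_1$, the relation $\beta_1^2=q\beta_1+p$ with $p\le q$ (from $c_1<1$) and $\beta_2=\beta_1+1$. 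The hardest step will be the exhaustive elimination of orbit types in the mixed case for $M\ge 3$, where several a priori plausible sub-configurations can be excluded only by simultaneous use of the algebraic mass-matching identities and the Diophantine floor-function constraints.
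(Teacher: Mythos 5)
Your framework for the necessity is essentially the paper's: pass from $\widetilde h_{\beta_1}=\widetilde h_{\beta_2}$ a.e.\ to equality of the unnormalized densities (equivalently, of your atomic measures $\rho_{\beta_i}$ on $(0,1]$), deduce a common finite orbit set, and split according to which of the two orbits of $1$ hits $0$. The sufficiency computation and the finiteness/both-hit-$0$ arguments are fine. But the mathematical heart of the theorem is exactly the step you defer: in the mixed case (one orbit absorbed at $0$, the other eventually periodic on the same nonzero point set), ruling out every configuration with more than one nonzero orbit point. Your proposal only announces ``an exhaustive elimination of orbit types \ldots by simultaneous use of the algebraic mass-matching identities and the Diophantine floor-function constraints''; no actual mechanism is given, and it is not clear that a case-by-case enumeration closes, since the number of points $M$ and the pre-period/period structure are unbounded. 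The paper's argument here is short but genuinely nontrivial and does not go through orbit-type enumeration at all: writing $h_{\beta_1}=1+\sum_{k=1}^m\beta_1^{-k}\cdot$(indicator of $[0,x_k)$) and the analogous formula for $h_{\beta_2}$ with an eventually periodic orbit, point-by-point matching of the jump sizes gives an equality of coefficient sets; comparing the largest jump forces $1/\beta_1=\frac{\beta_2^{m+1-\ell}}{\beta_2^{m+1-\ell}-1}\beta_2^{-\ell}$, comparing the second largest forces $\beta_2=\beta_1^2$, and then the coefficient sets become the odd and even powers of $1/\beta_1$, whose union can equal $\{\beta_1^{-1},\dots,\beta_1^{-m}\}$ only if $\beta_1^{m+1}=\beta_1+1$. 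This last identity yields $1<\beta_1<2$ and $T_{\beta_1}^{m+1}(1)=\beta_1^m(\beta_1-1)\in(0,1)$, contradicting $T_{\beta_1}^{m+1}(1)=0$ when $m\ge 2$. Without something playing the role of these maximal-jump comparisons, your plan for $M\ge 3$ is a statement of intent, not a proof, so the necessity is not established.

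Two smaller points. First, your derivation of $K_{\beta_1}=K_{\beta_2}$ from ``equality of densities together with $\int_0^1\widetilde h_\beta\,dx=1$'' is vacuous: both facts hold automatically and carry no information about the constants. The correct anchor (which your measure framework does contain) is the unit atom at $x=1$, equivalently $\lim_{x\to1^-}h_{\beta_i}(x)=1$, combined with right-continuity of $x\mapsto\rho_{\beta_i}((x,1])$; this is how the paper gets $K_{\beta_1}=K_{\beta_2}$ and then pointwise equality $h_{\beta_1}=h_{\beta_2}$. Second, in the case where neither orbit hits $0$ you again appeal to ``finitely many orbit types,'' which is neither true a priori nor needed: the total mass $\rho_{\beta_i}((0,1])=\sum_{n\ge0}\beta_i^{-n}=\beta_i/(\beta_i-1)$ already forces $\beta_1=\beta_2$, which is the paper's one-line argument. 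These two items are easily repaired, but the $M\ge3$ elimination in the mixed case is a genuine gap.
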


The proof of Theorem \ref{main-result} relies on a thorough analysis of the density function of R\'{e}nyi-Parry measures, which will be addressed in the next section.
Taking Hochman-Shmerkin theorem into consideration, we obtain the following corollary immediately.

\begin{corollary}
  Let $\beta_1>1$ be a Pisot number of degree $\ge 3$ and $\beta_2>1$ with $\beta_1 \nsim \beta_2$.
  Then there are no jointly invariant under $T_{\beta_1},T_{\beta_2}$ and ergodic Borel probability measures with positive entropy under $T_{\beta_2}$.
\end{corollary}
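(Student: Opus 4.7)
The plan is to deduce this as a short corollary of the Hochman--Shmerkin measure rigidity theorem quoted in the introduction and of Theorem~\ref{main-result}, proceeding by contradiction. Assume there is an ergodic Borel probability measure $\mu$ that is jointly invariant under $T_{\beta_1}$ and $T_{\beta_2}$ and has positive entropy with respect to $T_{\beta_2}$. Ergodicity of $\mu$ under $T_{\beta_2}$ means its only ergodic component under $T_{\beta_2}$ is $\mu$ itself, so the positive-entropy hypothesis of the Hochman--Shmerkin theorem is satisfied trivially. Since $\beta_1$ is Pisot and $\beta_1 \nsim \beta_2$, that theorem applies and forces $\mu$ to be the common R\'enyi--Parry measure of $\beta_1$ and $\beta_2$; in particular $\nu_{\beta_1} = \nu_{\beta_2}$.

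Next I would check that this equality of R\'enyi--Parry measures falls into the regime covered by Theorem~\ref{main-result}, i.e.\ between two distinct non-integers. First, $\beta_1$ is a Pisot number of degree $\ge 3$, hence an algebraic irrational, so $\beta_1 \notin \mathbb{N}$. Second, if $\beta_2$ were an integer then $\nu_{\beta_2}$ would be Lebesgue measure, whereas $\nu_{\beta_1}$ is not Lebesgue (the Lebesgue measure is not $T_{\beta_1}$-invariant because $\beta_1$ is a non-integer, as recalled in Section~\ref{sec:Introduction}); hence $\beta_2$ is also a non-integer. Third, $\beta_1 \neq \beta_2$, since $\beta_1 = \beta_2$ would give $\log\beta_1/\log\beta_2 = 1 \in \mathbb{Q}$, contradicting $\beta_1 \nsim \beta_2$.

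Now Theorem~\ref{main-result} applies to the two distinct non-integers $\beta_1,\beta_2$ with $\nu_{\beta_1}=\nu_{\beta_2}$, and forces $\beta_1$ to be a root of $x^2-qx-p=0$ for some $p,q\in\mathbb{N}$ with $p\le q$. In particular $\beta_1$ is a Pisot number of degree exactly $2$, contradicting the hypothesis that $\beta_1$ has degree $\ge 3$. There is no genuine obstacle: the corollary is a one-line logical consequence of the two stated theorems, and the only content of the proof is the elementary verification in the middle paragraph that $\beta_1,\beta_2$ are indeed two distinct non-integers, which is what allows Theorem~\ref{main-result} to be invoked.
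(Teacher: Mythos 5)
Your proof is correct and follows exactly the route the paper intends: the corollary is stated there as an immediate consequence of the Hochman--Shmerkin theorem combined with Theorem~\ref{main-result}, which is precisely your contradiction argument. Your middle paragraph merely makes explicit the routine checks (that $\beta_1,\beta_2$ are distinct non-integers) that the paper leaves implicit, so there is nothing to add.
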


We end this section by using Theorem \ref{main-result} to discuss multiplicative independence when the R\'{e}nyi-Parry measures coincide.

\begin{corollary}
  For two non-integers $\beta_1,\beta_2>1$, if $\nu_{\beta_1} = \nu_{\beta_2}$ and $\log \beta_1 / \log \beta_2 \in \mathbb{Q}$, then we have $$\big\{ \beta_1, \beta_2 \big\}= \bigg\{ \frac{1+\sqrt{5}}{2}, \bigg(\frac{1+\sqrt{5}}{2}\bigg)^2 \bigg\}.$$
\end{corollary}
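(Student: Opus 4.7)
The plan is to combine Theorem~\ref{main-result} with a divisibility argument in the quadratic field $\mathbb{Q}(\beta_1)$. By that theorem I may relabel so that $\beta_2=\beta_1+1$ and $\beta_1$ is a quadratic Pisot root of $x^2-qx-p=0$ for some $p,q\in\mathbb{N}$ with $p\le q$. Writing $\log\beta_1/\log\beta_2=n/m$ in lowest terms and clearing denominators gives the identity $\beta_1^m=\beta_2^n=(\beta_1+1)^n$; the inequality $\beta_2>\beta_1>1$ forces $m>n\ge 1$.

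The key step is to conjugate this identity. Let $\beta_1'\in(-1,0)$ denote the Galois conjugate of $\beta_1$ in $\mathbb{Q}(\beta_1)$; applying the nontrivial $\mathbb{Q}$-automorphism yields $(\beta_1')^m=(\beta_1'+1)^n$, so both $\beta_1$ and $\beta_1'$ are roots of $P(x):=x^m-(x+1)^n\in\mathbb{Z}[x]$. Hence the minimal polynomial $x^2-qx-p$ of $\beta_1$ divides $P(x)$, and by Gauss's lemma there exists $Q\in\mathbb{Z}[x]$ with $P(x)=(x^2-qx-p)\,Q(x)$.

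Two short calculations then pin down $\beta_1$. Evaluating the factorization at $x=0$ gives $-1=P(0)=-p\cdot Q(0)$, and since $p\ge 1$ and $Q(0)\in\mathbb{Z}$ this forces $p=1$. With $p=1$ one has $N(\beta_2)=(\beta_1+1)(\beta_1'+1)=\beta_1\beta_1'+(\beta_1+\beta_1')+1=-1+q+1=q$, so taking norms in $\beta_1^m=\beta_2^n$ yields $(-1)^m=q^n$; since $q\ge 1$ this forces $q=1$ (with $m$ even). Therefore $\beta_1=(1+\sqrt 5)/2$ and $\beta_2=\beta_1+1=\beta_1^2$, yielding the desired equality of sets. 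The only place genuine insight is required is the Galois-conjugation step producing the divisibility $x^2-qx-p\mid x^m-(x+1)^n$; once that is in hand, the constant-term and norm computations are essentially bookkeeping.
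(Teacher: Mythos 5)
Your proposal is correct and follows essentially the same route as the paper: invoke Theorem~\ref{main-result}, turn $\log\beta_1/\log\beta_2\in\mathbb{Q}$ into $\beta_1^m=(\beta_1+1)^n$, use that $x^2-qx-p$ is the minimal polynomial to get the divisibility $x^2-qx-p \mid x^m-(x+1)^n$ in $\mathbb{Z}[x]$, and read off $p=1$ from the constant term. The only cosmetic difference is the last step: the paper evaluates the factorization at $x=-1$ to force $q=1$, whereas you take norms of $\beta_1^m=\beta_2^n$; these are the same computation, since $1+q-p$ is exactly $(\beta_1+1)(\beta_1'+1)$.
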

\begin{proof}
  According to Theorem \ref{main-result}, it follows  that $\beta_1$ is the root of equation $x^2-qx-p=0$, where $p,q\in\mathbb{N}$ with $p\leq q$, and $\beta_2 = \beta_1 + 1$.
  Since $\log \beta_1 / \log \beta_2 \in \mathbb{Q}$, there exist $m,n\in \N$ such that $\beta_1^n = \beta_2^m$. That is, $\beta_1^n - (\beta_1 +1)^m=0$.
  Note that $\beta_1$ is an algebraic integer with the minimal polynomial $x^2-qx-p$.
  Thus we have $x^2-qx-p \mid x^n - (x+1)^m$ in $\mathbb{Z}[x]$.
  By taking $x=0$ and $x=-1$, we obtain $p=1$ and $q=1$, respectively.
  It follows that $\beta_1 = (1+\sqrt{5})/2$ and $\beta_2 = \beta_1 +1 = \beta_1^2$, as desired.
\end{proof}

\section{Proof of Theorem \ref{main-result}}\label{sec:proof}
In this section, we will give a complete proof of Theorem \ref{main-result}.
Recall the formula of density function of R\'{e}nyi-Parry measures from \eqref{eq:nor-density}.
Let
\begin{equation}\label{eq:nonor-den}
{h}_\beta(x) = \sum_{ x < T_\beta^n (1)} \frac{1}{\beta^n},\quad x \in [0,1),
\end{equation}
denote the initial density function without normalization constant.
Then we have $$K_\beta = \int_{0}^1 {h}_\beta(x) \;\mathrm{d} x\quad  \text{and} \quad \widetilde{h}_\beta(x) = \frac{h_\beta(x)}{K_\beta}.$$
We first verify the sufficiency of Theorem \ref{main-result} by direct calculation.

\begin{proposition}\label{prop:suff}
If $\beta_1>1$ is the root of equation $x^2-qx-p=0$, where $p,q\in\mathbb{N}$ with $p\leq q$, and $\beta_2=\beta_1+1$, then the R\'{e}nyi-Parry measures for $T_{\beta_1}, T_{\beta_2}$ coincide.
\end{proposition}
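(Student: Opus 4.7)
The plan is to compute the orbits $(T_{\beta_i}^n(1))_{n\ge 0}$ explicitly for $i=1,2$ and then read off the unnormalized densities $h_{\beta_1}$ and $h_{\beta_2}$ from formula \eqref{eq:nonor-den}, showing they coincide pointwise on $[0,1)$. Since the normalization constants are integrals of these densities, they will then automatically agree, and hence so will the normalized densities $\widetilde{h}_{\beta_1}=\widetilde{h}_{\beta_2}$.

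First, I would locate the integer parts. From the hypothesis $p\le q$ (and $p\ge 1$, since otherwise $\beta_1=q\in\mathbb N$) the quadratic formula gives $q<\beta_1<q+1$, so $\lfloor\beta_1\rfloor=q$ and $\lfloor\beta_2\rfloor=q+1$. Using the defining relation $\beta_1^2=q\beta_1+p$, I would then compute
\[
T_{\beta_1}(1)=\beta_1-q=\frac{p}{\beta_1},\qquad T_{\beta_2}(1)=\beta_2-(q+1)=\beta_1-q=\frac{p}{\beta_1},
\]
so the two first iterates coincide. Note also $p/\beta_1<1$ because $\beta_1>q\ge p$.

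Next, I would determine the subsequent orbits. For $\beta_1$, applying $T_{\beta_1}$ to $p/\beta_1$ gives $\beta_1\cdot p/\beta_1=p\in\mathbb N$, so $T_{\beta_1}^2(1)=0$ and hence $T_{\beta_1}^n(1)=0$ for all $n\ge 2$. For $\beta_2=\beta_1+1$, applying $T_{\beta_2}$ to $p/\beta_1$ gives $\beta_2\cdot p/\beta_1=p(\beta_1+1)/\beta_1=p+p/\beta_1$, which reduces mod $1$ to $p/\beta_1$; so $p/\beta_1$ is a fixed point of $T_{\beta_2}$ and $T_{\beta_2}^n(1)=p/\beta_1$ for every $n\ge 1$. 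This orbit dichotomy, trailing zeros versus a fixed point, is the conceptual heart of the proof.

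Finally, I would plug the orbits into \eqref{eq:nonor-den}. On $[0,p/\beta_1)$ only the terms $n=0,1$ contribute for $h_{\beta_1}$, giving $1+1/\beta_1$; for $h_{\beta_2}$ every $n\ge 0$ contributes, giving the geometric sum $\sum_{n\ge 0}\beta_2^{-n}=\beta_2/(\beta_2-1)=\beta_2/\beta_1=1+1/\beta_1$. On $[p/\beta_1,1)$ only $n=0$ contributes in either case, giving $1$. Thus $h_{\beta_1}\equiv h_{\beta_2}$ on $[0,1)$, which yields $K_{\beta_1}=K_{\beta_2}$ and $\widetilde{h}_{\beta_1}=\widetilde{h}_{\beta_2}$, proving $\nu_{\beta_1}=\nu_{\beta_2}$. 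The only place where cleverness is needed is recognizing that the identity $\beta_2=\beta_1+1$ is exactly what makes the geometric sum $\beta_2/(\beta_2-1)$ collapse to $1+1/\beta_1$; the rest is routine bookkeeping.
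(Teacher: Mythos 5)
Your proposal is correct and follows essentially the same route as the paper: compute both orbits explicitly (the $\beta_1$-orbit hits $0$ after two steps since $\beta_1(\beta_1-q)=p\in\mathbb{N}$, while $\beta_1-q=p/\beta_1$ is a fixed point of $T_{\beta_2}$), then compare the unnormalized densities from \eqref{eq:nonor-den}, with the geometric series collapsing via $\beta_2-1=\beta_1$. The only cosmetic difference is that you write the common value $T_{\beta_1}(1)=T_{\beta_2}(1)$ as $p/\beta_1$ where the paper keeps it as $\beta_1-q$.
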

\begin{proof}
Note that $\beta_1^2 - q \beta_1 - p =0$, where $p,q\in\mathbb{N}$ with $p\leq q$.
Then we have $q< \beta_1 < q+1$, $T_{\beta_1} (1) = \beta_1 - q$, and $T_{\beta_1}^n (1) =0$ for all $n \ge 2$.
It follows from \eqref{eq:nonor-den} that $${h}_{\beta_1}(x) = \mathds{1}_{[0,1)}(x) +\frac{1}{\beta_1}\mathds{1}_{[0,\beta_1 - q)}(x)$$
where $\mathds{1}_{A}(x)$ is the characteristic function of set $A$.

Recall that $\beta_2=\beta_1+1$. It follows that $T_{\beta_2}(1)=\beta_1 - q$ and $\beta_2 \cdot T_{\beta_2}(1) = (\beta_1+1)(\beta_1 - q) = \beta_1 - q + p$.
This implies that $T_{\beta_2}^n (1) = \beta_1 -q$ for all $n \ge 1$.
Again by \eqref{eq:nonor-den} we obtain
$${h}_{\beta_2}(x) = \mathds{1}_{[0,1)}(x) + \sum_{n=1}^{\f}\frac{1}{\beta_2^n} \cdot \mathds{1}_{[0,\beta_1 - q)}(x) = \mathds{1}_{[0,1)}(x) + \frac{1}{\beta_2-1} \mathds{1}_{[0,\beta_1 - q)}(x) = {h}_{\beta_1}(x).$$
Thus we are led to the conclusion that $\nu_{\beta_1} = \nu_{\beta_2}$.
\end{proof}

The rest of this section is devoted to prove the necessity of Theorem \ref{main-result}.
Suppose that $\beta_1, \beta_2 > 1$ are two non-integers with $\nu_{\beta_1} = \nu_{\beta_2}$.
Then the density functions $\widetilde{h}_{\beta_1}(x) = \widetilde{h}_{\beta_2}(x)$ for Lebesgue almost everywhere $x \in [0,1)$.
First, we will show that the initial density functions ${h}_{\beta_1}(x) = {h}_{\beta_2}(x)$ for each $x\in [0,1)$.
Write $\mathcal{O}_{\beta}$ for the orbit of $1$ under $\beta$-transformation, i.e.,
\begin{equation*}\label{eq:def-orb-1}
\mathcal{O}_{\beta}:=\big\{ T_{\beta}^n (1): n \geq 1 \big\}.
\end{equation*}
Next, we will prove that $\mathcal{O}_{\beta_1}, \mathcal{O}_{\beta_2}$ must be finite sets, and $\mathcal{O}_{\beta_1} \setminus \{0\} = \mathcal{O}_{\beta_2} \setminus \{0\}$.
Moreover, we will show that $0$ is exactly in one of  $\mathcal{O}_{\beta_1}$ and $\mathcal{O}_{\beta_2}$.
Finally, through a more detailed analysis we can complete the proof of necessity.
Before proceeding with the detailed proof, we give some properties of initial density function.

\begin{proposition}\label{prop:property}
  Let $\beta > 1$ be a non-integer. Then we have

    {\rm(i)} $\displaystyle\lim_{x \to 1^{-}} {h}_{\beta}(x) =1$;

    {\rm(ii)} ${h}_\beta(x)$ is decreasing and right continuous on $[0,1)$;

    {\rm(iii)} ${h}_\beta(x)$ is constant on an open interval $(a,b)\subset(0,1)$ if and only if $(a,b) \cap \mathcal{O}_\beta = \emptyset$.
\end{proposition}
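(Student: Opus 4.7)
The plan is to work directly from the defining series $h_\beta(x)=\sum_{x<T_\beta^n(1)}\beta^{-n}$, exploiting that it is a positive sum whose index set $\{n\ge 0 : T_\beta^n(1)>x\}$ depends monotonically on $x$.

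For (i), I would split off the $n=0$ term, which always contributes $1$ when $x<1$, and then control the tail. Given $\ep>0$, choose $N$ with $\sum_{n\ge N}\beta^{-n}<\ep$; since $T_\beta^n(1)\in[0,1)$ for every $n\ge 1$, the finite set $\{T_\beta^n(1):1\le n<N\}$ has maximum $M<1$. For $x\in(M,1)$, the only indices $n\ge 1$ that can satisfy $T_\beta^n(1)>x$ are those with $n\ge N$, hence $1\le h_\beta(x)\le 1+\ep$. Part (ii) is essentially soft: monotonicity is immediate because $x_1<x_2$ forces $\{n:T_\beta^n(1)>x_1\}\supseteq\{n:T_\beta^n(1)>x_2\}$, and right continuity follows from monotone convergence, noting that as $x_k\downarrow x$ the index sets $\{n:T_\beta^n(1)>x_k\}$ increase to $\{n:T_\beta^n(1)>x\}$.

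For (iii), the $(\Leftarrow)$ direction is analogous to (ii): if $(a,b)\cap\mathcal{O}_\beta=\emptyset$, then for every $x\in(a,b)$ and every $n\ge 1$ the inequality $T_\beta^n(1)>x$ is equivalent to $T_\beta^n(1)\ge b$, so together with the invariant contribution from $n=0$ this makes $h_\beta$ constant on $(a,b)$. For the converse, assume $c=T_\beta^n(1)\in(a,b)$ for some $n\ge 1$ and pick $x\in(a,c)$, $y\in(c,b)$; then
\[
h_\beta(x)-h_\beta(y)=\sum_{n':\,x<T_\beta^{n'}(1)\le y}\beta^{-n'}\ge\beta^{-n}>0,
\]
so $h_\beta$ cannot be constant on $(a,b)$.

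The only delicate point is the tail estimate in (i), where one has to combine summability of the geometric series with the fact, implicit in the definition of $T_\beta$, that every $T_\beta^n(1)$ with $n\ge 1$ lies strictly below $1$ (so that only finitely many orbit points can cluster near $1$ from the left within any prescribed tolerance). The remaining steps reduce to elementary set-theoretic manipulations of the index set under monotone pointwise limits.
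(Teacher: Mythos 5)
Your proposal is correct and follows essentially the same route as the paper: all three parts are handled by direct manipulation of the index set $\{n : T_\beta^n(1)>x\}$, with (i) via a geometric tail estimate plus the fact that the finitely many orbit points below the cutoff stay strictly below $1$, and (iii) via the same two-sided comparison of index sets. The only cosmetic difference is that you obtain right continuity in (ii) from monotone convergence of the increasing index sets as $x_k\downarrow x$, whereas the paper runs an explicit $\varepsilon$--$\delta$ tail argument; the two are interchangeable.
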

\begin{proof}
  For $x \in [0,1)$, define $N_x := \big\{ n \ge 1: T_\beta^n (1) > x \big\}$.
  Note that $T_\beta^0(1) = 1$. From \eqref{eq:nonor-den}, we have $${h}_\beta(x) = 1 +\sum_{n \in N_x} \frac{1}{\beta^n},\; x\in [0,1).$$

  (i) For any $\varepsilon >0$, there exists $\kappa=\kappa(\varepsilon) \in \N$ such that $$\sum_{n=\kappa+1}^{\f} \frac{1}{\beta^n} < \varepsilon.$$
  Set $\delta = \min\big\{ 1 - T_\beta^n (1): n=1,2,\ldots, \kappa \big\}$. For any $1-\delta < x < 1$, we have $N_x \cap \{1,2,\ldots, \kappa\} = \emptyset$, and hence,
  $$1 \le {h}_\beta(x) = 1+ \sum_{n \in N_x} \frac{1}{\beta^n} \le 1 + \sum_{n=\kappa+1}^{\f} \frac{1}{\beta^n} < 1+ \varepsilon.$$
  Thus we get $$ \lim_{x \to 1^{-}} {h}_{\beta}(x) =1.$$

  (ii) Note that $N_{x_2}\subset N_{x_1}$ for any  $0\leq x_1 < x_2 <1$. It follows that $h_\beta(x)$ is decreasing on $[0,1)$. It remains to show the right continuity.

  Fix $x_0 \in [0,1)$.
  For any $\varepsilon >0$ there exists $\kappa=\kappa(\varepsilon) \in \N$ such that $$\sum_{n=\kappa+1}^{\f} \frac{1}{\beta^n} < \varepsilon.$$
  Put $\delta =\min \big\{ T_\beta^n (1) - x_0 : n \in N_{x_0} \cap \{1,2,\ldots, \kappa\} \big\}$ if $N_{x_0} \cap \{1,2,\ldots, \kappa\} \ne \emptyset$; otherwise, put $\delta = 1- x_0$.
  Then we have $(x_0, x_0 + \delta) \cap \big\{ T_\beta^n (1) : 1 \le n \le \kappa \big\} = \emptyset$.
  As a result, for any $x_0 < x < x_0 + \delta$,
  $$(N_{x_0} \setminus N_x) \cap \{1,2,\ldots \kappa\} = \big\{ 1\le n \le \kappa: x_0 < T_\beta^n (1) \le x  \big\} = \emptyset.$$
  It follows that
  $$0 \le h_\beta(x_0) - h_\beta(x) = \sum_{n \in N_{x_0} \setminus N_x} \frac{1}{\beta^n} \le \sum_{n=\kappa+1}^{\f} \frac{1}{\beta^n} < \varepsilon.$$
  Thus we have established that $$\lim_{x \to x_0^+} {h}_\beta(x) = {h}_\beta(x_0).$$
  Since $x_0$ is arbitrarily fixed, we conclude that ${h}_\beta(x)$ is right continuous on $[0,1)$.

  (iii) For sufficiency, suppose on the contrary that there exist $a < x_1 < x_2 < b$ such that ${h}_\beta(x_1)\neq  {h}_\beta(x_2)$.
  Then we have $$ h_\beta(x_1) - h_\beta(x_2) = \sum_{n \in N_{x_1} \setminus N_{x_2} } \frac{1}{\beta^n} >0,$$
  which implies that $N_{x_1} \setminus N_{x_2} \ne \emptyset$.
  Note that $x_1 < T_\beta^n(1) \le x_2$ for any $n \in N_{x_1} \setminus N_{x_2}$.
  This is contrary with $(a,b) \cap \mathcal{O}_\beta = \emptyset$.
  Thus we have derived that ${h}_\beta(x)$ is constant on $(a,b)$.

  For necessary, suppose that $(a,b) \cap \mathcal{O}_\beta \neq  \emptyset$. Then there exists $\ell \in \N$ satisfying $a < T_\beta^\ell (1) < b$.
  Take any $a < x_1 < T_\beta^\ell (1) < x_2 < b$. Then we have $\ell \in N_{x_1}$ and $\ell \not\in N_{x_2}$.
  It follows that $${h}_\beta(x_1) - {h}_\beta(x_2) =  \sum_{n \in N_{x_1} \setminus N_{x_2}} \frac{1}{\beta^n} \ge \frac{1}{\beta^\ell} >0,$$
  a contradiction.
  Therefore we conclude that $(a,b) \cap \mathcal{O}_\beta = \emptyset$.
  The proof is completed.
\end{proof}

The following lemma is an easy exercise in real analysis, which will be used in our subsequent proof.
The proof is left to the reader.

\begin{lemma}\label{lemma:limit}
  Let $f,g: (0,1) \to \R$ be two functions satisfying $f(x) = g(x)$ for Lebesgue almost everywhere $x\in(0,1)$.
  If the left limits $\displaystyle\lim_{x \to x_0^{-}} f(x)$ and $\displaystyle\lim_{x \to x_0^{-}} g(x)$ exist for some $0< x_0 \le 1$, then we have $$\lim_{x \to x_0^{-}} f(x) = \lim_{x \to x_0^{-}} g(x).$$
  Similarly, if the right limits $\displaystyle\lim_{x \to x_0^{+}} f(x)$ and $\displaystyle\lim_{x \to x_0^{+}} g(x)$ exist for some $0 \le x_0 < 1$, then $$\lim_{x \to x_0^{+}} f(x) = \lim_{x \to x_0^{+}} g(x).$$
\end{lemma}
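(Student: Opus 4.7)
The plan is to exploit the fact that the set where $f$ and $g$ disagree is Lebesgue-null, so in any one-sided neighborhood of $x_0$ there are points (arbitrarily close to $x_0$) at which $f$ and $g$ coincide; one then extracts a sequence through such points and appeals to the assumed existence of the one-sided limits.

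First I would set $E := \{x \in (0,1) : f(x) \ne g(x)\}$, which has Lebesgue measure zero by hypothesis. For the left-limit statement, fix $0 < x_0 \le 1$ and write $L_f := \lim_{x \to x_0^-} f(x)$ and $L_g := \lim_{x \to x_0^-} g(x)$. For each $n \in \mathbb{N}$ with $1/n < x_0$, the interval $(x_0 - 1/n,\, x_0)$ has positive Lebesgue measure, hence is not contained in $E$, so I can pick $x_n \in (x_0 - 1/n,\, x_0)$ with $f(x_n) = g(x_n)$.

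Then $x_n \to x_0^-$, so by the definition of the left limits we get $f(x_n) \to L_f$ and $g(x_n) \to L_g$. Since $f(x_n) = g(x_n)$ for every $n$, these two sequences of real numbers are identical, and therefore their limits agree: $L_f = L_g$. The right-limit case is completely symmetric: replace $(x_0 - 1/n,\, x_0)$ by $(x_0,\, x_0 + 1/n)$ and repeat the argument.

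There is no real obstacle here; the only point that would need a line of care is justifying that $(x_0 - 1/n,\, x_0) \not\subset E$, which follows immediately since Lebesgue measure of the interval is $1/n > 0$ while $E$ has measure zero.
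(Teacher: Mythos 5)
Your proof is correct: the set where $f$ and $g$ differ is null, so each interval $(x_0-1/n,\,x_0)$ (of positive measure) contains a point $x_n$ with $f(x_n)=g(x_n)$, and the sequential characterization of one-sided limits gives $L_f=L_g$; the right-limit case is symmetric. The paper explicitly leaves this lemma's proof to the reader, and your argument is exactly the standard one intended, so there is nothing to compare beyond noting that you have supplied the omitted details correctly.
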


We now proceed to the concrete proof of necessity of Theorem \ref{main-result}. First, we deal with the initial density function.

\begin{proposition}\label{prop:equal}
  If $\beta_1,\beta_2>1$ are two non-integers with $\nu_{\beta_1}=\nu_{\beta_2}$, then ${h}_{\beta_1}(x) = {h}_{\beta_2}(x)$ for each $x \in [0,1)$.
\end{proposition}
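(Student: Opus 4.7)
The plan is to reduce the a.e.\ equality $\widetilde{h}_{\beta_1} = \widetilde{h}_{\beta_2}$ to pointwise equality of $h_{\beta_1}$ and $h_{\beta_2}$ in two stages: first identify the normalization constants $K_{\beta_1}$ and $K_{\beta_2}$, and then promote an a.e.\ equality of $h_{\beta_1}, h_{\beta_2}$ to an everywhere equality using the right continuity established in Proposition \ref{prop:property}(ii).

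For the first stage, I would compare left limits at the endpoint $x_0 = 1$. Since $\nu_{\beta_1} = \nu_{\beta_2}$ means $h_{\beta_1}(x)/K_{\beta_1} = h_{\beta_2}(x)/K_{\beta_2}$ for Lebesgue almost every $x \in (0,1)$, and by Proposition \ref{prop:property}(i) both $h_{\beta_i}$ have left limit $1$ at $x_0 = 1$, Lemma \ref{lemma:limit} applied to $f = h_{\beta_1}/K_{\beta_1}$ and $g = h_{\beta_2}/K_{\beta_2}$ yields $1/K_{\beta_1} = 1/K_{\beta_2}$, i.e.\ $K_{\beta_1} = K_{\beta_2}$. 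Consequently $h_{\beta_1}(x) = h_{\beta_2}(x)$ for almost every $x \in [0,1)$.

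For the second stage, I would fix an arbitrary $x_0 \in [0,1)$ and exploit right continuity. The set on which $h_{\beta_1}$ and $h_{\beta_2}$ agree has full measure, so it is dense in every interval $(x_0, x_0 + \eta) \subset [0,1)$; pick a decreasing sequence $x_n \searrow x_0$ with $h_{\beta_1}(x_n) = h_{\beta_2}(x_n)$ for all $n$. By Proposition \ref{prop:property}(ii) both $h_{\beta_i}$ are right continuous at $x_0$, hence passing to the limit gives $h_{\beta_1}(x_0) = h_{\beta_2}(x_0)$. Since $x_0$ is arbitrary, this establishes the pointwise identity on all of $[0,1)$.

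The argument is essentially a straightforward application of the two tools already in hand; there is no real obstacle. The only subtlety worth flagging is that one must fix the normalization constants \emph{before} taking pointwise limits, since otherwise one only obtains $h_{\beta_1}(x_0)/K_{\beta_1} = h_{\beta_2}(x_0)/K_{\beta_2}$, which is not the required statement. Using the normalization $h_{\beta_i}(x) \to 1$ as $x \to 1^-$ (which is an intrinsic feature, not assumed) is what forces $K_{\beta_1} = K_{\beta_2}$.
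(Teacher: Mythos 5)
Your proposal is correct and follows essentially the same route as the paper: identify $K_{\beta_1}=K_{\beta_2}$ via the left limits at $1$ using Proposition \ref{prop:property}(i) and Lemma \ref{lemma:limit}, then upgrade the a.e.\ equality of $h_{\beta_1},h_{\beta_2}$ to pointwise equality via right continuity. Your second stage just reproves the right-limit case of Lemma \ref{lemma:limit} by hand (density of the full-measure set plus a sequence $x_n\searrow x_0$) instead of citing it, which is an immaterial difference.
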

\begin{proof}
As $\nu_{\beta_1}=\nu_{\beta_2}$, the density functions $\widetilde{h}_{\beta_1}(x) = \widetilde{h}_{\beta_2}(x)$ for Lebeague almost everywhere $x\in [0,1)$.
It follows from Proposition \ref{prop:property} (i) that $$\lim_{x \to 1^-} \widetilde{h}_{\beta_i}(x) = \frac{1}{K_{\beta_i}} \cdot  \lim_{x \to 1^-} {h}_{\beta_i}(x)= \frac{1}{K_{\beta_i}} \quad\text{for } i =1,2. $$
By Lemma \ref{lemma:limit}, we obtain $$ \frac{1}{K_{\beta_1}} = \lim_{x \to 1^-} \widetilde{h}_{\beta_1}(x) = \lim_{x \to 1^-} \widetilde{h}_{\beta_2}(x) = \frac{1}{K_{\beta_2}}. $$
That is, $K_{\beta_1} = K_{\beta_2}$.
Consequently, the initial density functions ${h}_{\beta_1}(x) = {h}_{\beta_2}(x)$ for Lebeague almost everywhere $x\in [0,1)$.
Recall from Proposition \ref{prop:property} (ii) that the initial density function is right continuous.
Again by Lemma \ref{lemma:limit}, we conclude that for each $x_0 \in [0,1)$,
$$h_{\beta_1}(x_0) = \lim_{x \to x_0^{+}} h_{\beta_1}(x) = \lim_{x \to x_0^{+}} h_{\beta_2}(x) = h_{\beta_2}(x_0).$$
\end{proof}

The orbit of $1$ under $\beta$-transformation will be addressed in the following two propositions.
\begin{proposition}\label{prop:finite}
Let $\beta_1,\beta_2 > 1$ be two different non-integers.
If $\nu_{\beta_1}=\nu_{\beta_2}$, then the sets $\mathcal{O}_{\beta_1}$ and $\mathcal{O}_{\beta_2}$ are finite, and $\mathcal{O}_{\beta_1} \setminus \{0\} = \mathcal{O}_{\beta_2} \setminus \{0\}$.
\end{proposition}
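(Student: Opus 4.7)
The plan is to prove the two conclusions separately. For the set equality $\mathcal{O}_{\beta_1}\setminus\{0\}=\mathcal{O}_{\beta_2}\setminus\{0\}$, I would first invoke Proposition \ref{prop:equal} to get $h_{\beta_1}(x)=h_{\beta_2}(x)$ for every $x\in[0,1)$, and then recover $\mathcal{O}_\beta\setminus\{0\}$ intrinsically from $h_\beta$ as the exact set of left-jump discontinuities inside $(0,1)$. A short monotone-convergence argument on the defining sum \eqref{eq:nonor-den} gives
\[
\lim_{x\to x_0^-} h_\beta(x) = \sum_{n\ge 0:\,T_\beta^n(1)\ge x_0} \frac{1}{\beta^n},
\]
so the left-jump height at $x_0$ is $h_\beta(x_0^-)-h_\beta(x_0)=\sum_{n\ge 0:\,T_\beta^n(1)=x_0}1/\beta^n$. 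For $x_0\in(0,1)$ we have $T_\beta^0(1)=1\ne x_0$, so this jump is strictly positive precisely when $x_0\in\mathcal{O}_\beta$. The pointwise identity $h_{\beta_1}=h_{\beta_2}$ therefore forces $\mathcal{O}_{\beta_1}\cap(0,1)=\mathcal{O}_{\beta_2}\cap(0,1)$, which is the claimed set equality.

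For finiteness, the key observation is that $T_\beta(0)=0$: once the orbit meets $0$ it stays there forever, so the moment $0\in\mathcal{O}_\beta$ the entire orbit is forced to be finite. It therefore suffices to show that at least one of the two orbits hits $0$. I would evaluate the identity $h_{\beta_1}(0)=h_{\beta_2}(0)$ via the decomposition
\[
h_\beta(0) = 1 + \sum_{n\ge 1} \frac{1}{\beta^n} - \sum_{n\ge 1:\,T_\beta^n(1)=0} \frac{1}{\beta^n} = \frac{\beta}{\beta-1} - \sum_{n\ge 1:\,T_\beta^n(1)=0} \frac{1}{\beta^n}.
\]
If neither orbit ever reached $0$, both subtracted sums would vanish and I would get $\beta_1/(\beta_1-1)=\beta_2/(\beta_2-1)$, hence $\beta_1=\beta_2$, contradicting the hypothesis. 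Therefore at least one orbit, say $\mathcal{O}_{\beta_1}$, contains $0$ and is finite; by the set equality just proved, $\mathcal{O}_{\beta_2}\setminus\{0\}$ is then finite as well, and so is $\mathcal{O}_{\beta_2}$ itself (it differs from its $\{0\}$-removal by at most one point).

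The main subtlety I anticipate is in the first step: Proposition \ref{prop:property} (iii) only characterises open intervals of constancy and so by itself recovers the closure of $\mathcal{O}_\beta\cap(0,1)$ rather than the set itself. The cleanest remedy is to bypass (iii) and read off the orbit directly from the explicit jump formula above. After that the argument is routine: a single evaluation at $x=0$ together with the dynamical observation that $0$ is a fixed point of $T_\beta$ closes the proof.
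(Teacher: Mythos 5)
Your proof is correct, but it reaches the conclusion by a genuinely different route from the paper. The paper first rules out that both orbits are infinite via the evaluation $h_{\beta_1}(0)=h_{\beta_2}(0)=\beta_i/(\beta_i-1)$, then, knowing (say) $\mathcal{O}_{\beta_1}$ is finite, partitions $(0,1)$ by its finitely many points and applies Proposition \ref{prop:property}(iii) on each gap to force $\mathcal{O}_{\beta_2}$ into those same points, obtaining one inclusion and finiteness of $\mathcal{O}_{\beta_2}$, and then repeats the argument symmetrically. You instead bypass (iii) entirely and sharpen Proposition \ref{prop:property} to an exact left-jump formula, $h_\beta(x_0^-)-h_\beta(x_0)=\sum_{n\ge 1,\,T_\beta^n(1)=x_0}\beta^{-n}$ for $x_0\in(0,1)$, which recovers $\mathcal{O}_\beta\cap(0,1)$ pointwise from $h_\beta$; since $h_{\beta_1}=h_{\beta_2}$ everywhere by Proposition \ref{prop:equal}, the set equality follows at once and, crucially, without any prior finiteness assumption. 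Your finiteness step is then essentially the paper's opening computation at $x=0$, rephrased as ``neither orbit hits $0$'' rather than ``both orbits are infinite,'' combined with the explicit remark that $0$ is a fixed point of $T_\beta$, so an orbit containing $0$ is automatically finite; the set equality transfers finiteness to the other orbit. What your route buys is independence from the finiteness bootstrap (the concern you raise about (iii) only detecting the closure of the orbit is exactly why the paper needs finiteness before invoking it), and as a bonus your intermediate conclusion that exactly one evaluation forces $0$ into at least one orbit anticipates part of Proposition \ref{prop:0}; what the paper's route buys is brevity given the already-stated properties (i)--(iii), with no need to compute one-sided limits of $h_\beta$ beyond what Proposition \ref{prop:property} records.
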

\begin{proof}
  Suppose first that both $\mathcal{O}_{\beta_1}$ and $\mathcal{O}_{\beta_2}$ are infinite.
  Then $T_{\beta_1}^n (1)>0$ and $T_{\beta_2}^n (1)>0$ for all $n \ge 1$.
  From (\ref{eq:nonor-den}) we obtain that $${h}_{\beta_i}(0) = \sum_{n=0}^{\f} \frac{1}{\beta_i^n} = \frac{\beta_i}{\beta_i -1} \quad \text{for } i=1,2.$$
  By Proposition \ref{prop:equal}, we have ${h}_{\beta_1}(0) = {h}_{\beta_2}(0)$.
  This yields that $\beta_1 = \beta_2$, a contradiction.
  Thus we have derived that at least one of sets $\mathcal{O}_{\beta_1}$ and $\mathcal{O}_{\beta_2}$ is finite.

  Without loss of generality, we may assume that $\mathcal{O}_{\beta_1}$ is finite.
  Then write $\mathcal{O}_{\beta_1} \cup \{0,1\} = \{ x_0, x_1, \ldots, x_{\ell+1} \}$, where $0=x_0< x_1  < \cdots < x_\ell < x_{\ell+1}=1$.
  For each $0 \le k \le \ell$, since $(x_{k}, x_{k+1}) \cap \mathcal{O}_{\beta_1} = \emptyset$, it follows from Proposition \ref{prop:property} (iii) that  ${h}_{\beta_1}(x)$ is constant on $(x_{k}, x_{k+1})$.
  According to Proposition \ref{prop:equal}, ${h}_{\beta_2}(x)$ is also constant on $(x_{k}, x_{k+1})$.
  Again by Proposition \ref{prop:property} (iii), we have that $\mathcal{O}_{\beta_2} \cap (x_{k}, x_{k+1}) = \emptyset$ for each $0\le k \le \ell$.
  Thus we conclude that $\mathcal{O}_{\beta_2} \setminus \{0\} \subset \mathcal{O}_{\beta_1} \setminus \{0\}$.
  In particular, $\mathcal{O}_{\beta_2}$ is a finite set.
  Using the same argument, it's easy to show the inverse conclusion that $\mathcal{O}_{\beta_1} \setminus \{0\} \subset \mathcal{O}_{\beta_2} \setminus \{0\}$.
  Therefore we arrive at the conclusion that $\mathcal{O}_{\beta_1}, \mathcal{O}_{\beta_2}$ are finite sets and $\mathcal{O}_{\beta_1} \setminus \{0\} = \mathcal{O}_{\beta_2} \setminus \{0\}$.
\end{proof}

\begin{proposition}\label{prop:0}
  Let $\beta_1,\beta_2 > 1$ be two different non-integers. If $\nu_{\beta_1}=\nu_{\beta_2}$, then the number $0$ is exactly in one of sets $\mathcal{O}_{\beta_1}$ and $\mathcal{O}_{\beta_2}$.
\end{proposition}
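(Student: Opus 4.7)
The plan is to rule out the two extreme cases — $0$ lying in neither of $\mathcal{O}_{\beta_1}, \mathcal{O}_{\beta_2}$ or in both — by showing each forces $\beta_1 = \beta_2$, contradicting the hypothesis. In each case the strategy is the same: evaluate $h_{\beta_i}(0)$ via \eqref{eq:nonor-den} and apply Proposition \ref{prop:equal} at the single point $x = 0$.

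For the case $0 \notin \mathcal{O}_{\beta_1} \cup \mathcal{O}_{\beta_2}$, every iterate $T_{\beta_i}^n(1)$ is strictly positive, so $h_{\beta_i}(0)$ collapses to the full geometric series $\beta_i/(\beta_i - 1)$, and equating the two values immediately gives $\beta_1 = \beta_2$. This is essentially already observed at the start of the proof of Proposition \ref{prop:finite}, and it will be re-used verbatim.

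The main step is the case $0 \in \mathcal{O}_{\beta_1} \cap \mathcal{O}_{\beta_2}$. For each $i$ I will introduce the smallest index $N_i \geq 1$ with $T_{\beta_i}^{N_i}(1) = 0$, and first argue that the iterates $T_{\beta_i}^0(1), \ldots, T_{\beta_i}^{N_i - 1}(1)$ are pairwise distinct: any earlier coincidence $T_{\beta_i}^a(1) = T_{\beta_i}^b(1)$ with $a < b < N_i$ would, by iteration, trap the orbit in a cycle of strictly positive numbers and prevent it from ever reaching $0$. This gives $|\mathcal{O}_{\beta_i} \setminus \{0\}| = N_i - 1$, so Proposition \ref{prop:finite} forces $N_1 = N_2 =: N$. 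Because each $\beta_i$ is non-integer, $T_{\beta_i}(1) = \beta_i - \lfloor\beta_i\rfloor > 0$, whence $N \geq 2$. All terms in \eqref{eq:nonor-den} with $n \geq N$ vanish at $x=0$, so $h_{\beta_i}(0) = \sum_{n=0}^{N-1} \beta_i^{-n}$, and Proposition \ref{prop:equal} at $x=0$ yields
$$\sum_{n=0}^{N-1} \frac{1}{\beta_1^n} = \sum_{n=0}^{N-1} \frac{1}{\beta_2^n}.$$
Since the function $t \mapsto \sum_{n=0}^{N-1} t^{-n}$ is strictly decreasing on $(1,\infty)$ whenever $N \geq 2$, this forces $\beta_1 = \beta_2$, again a contradiction.

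The main obstacle is the cardinality matching in the second case. Producing the identity $N_1 = N_2$ is what ensures the two truncated sums above have the same number of terms, reducing the problem to strict monotonicity of an elementary function in $\beta$; this matching depends crucially on combining the distinctness of the pre-$0$ iterates with Proposition \ref{prop:finite}. Once this is in place, the conclusion that exactly one of the two orbits contains $0$ follows at once.
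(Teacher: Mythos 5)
Your proposal is correct and follows essentially the same route as the paper: rule out the ``neither'' case via $h_{\beta_i}(0)=\beta_i/(\beta_i-1)$, and rule out the ``both'' case by matching the lengths of the finite orbits through Proposition \ref{prop:finite} and comparing the truncated sums at $x=0$. The only difference is that you spell out two details the paper leaves implicit (the pairwise distinctness of the pre-zero iterates and the strict monotonicity of $t\mapsto\sum_{n=0}^{N-1}t^{-n}$), which is fine.
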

\begin{proof}
  The proof is conducted by contradiction. Suppose first that $0 \not\in \mathcal{O}_{\beta_1} \cup \mathcal{O}_{\beta_2}$. Then $T_{\beta_1}^n (1)>0$ and $T_{\beta_2}^n (1)>0$ for all $n \ge 1$.
  We now proceed as in the proof of Proposition \ref{prop:finite}.
  From (\ref{eq:nonor-den}) we find that $${h}_{\beta_i}(0) = \frac{\beta_i}{\beta_i -1} \quad \text{for } i=1,2.$$
  By Proposition \ref{prop:equal}, we have ${h}_{\beta_1}(0) = {h}_{\beta_2}(0)$.
  This implies that $\beta_1 = \beta_2$, a contradiction.

  Next, suppose that $0 \in \mathcal{O}_{\beta_1} \cap \mathcal{O}_{\beta_2}$.
  Let $n_i = \min\{ n \ge 1: T_{\beta_i}^{n+1}(1) =0 \}$ for $i =1,2$.
  Then we have $$ \mathcal{O}_{\beta_i} \setminus \{0\} = \big\{ T_{\beta_i}(1), T_{\beta_i}^2 (1), \ldots, T_{\beta_i}^{n_i}(1) \big\}\quad \text{and}\quad
  {h}_{\beta_i}(0) = \sum_{n=0}^{n_i} \frac{1}{\beta_i^n}.$$
  From Proposition \ref{prop:finite}, we have $\mathcal{O}_{\beta_1} \setminus \{0\} = \mathcal{O}_{\beta_2} \setminus \{0\}$, which yields that $n_1 = n_2$.
  However, it follows from Proposition \ref{prop:equal} that
  $$\sum_{n=0}^{n_1} \frac{1}{\beta_1^n} = {h}_{\beta_1}(0) = {h}_{\beta_2}(0) = \sum_{n=0}^{n_2} \frac{1}{\beta_2^n}.$$
  This is contrary with $\beta_1 \ne \beta_2$.

  Therefore we conclude that the number $0$ is exactly in one of sets $\mathcal{O}_{\beta_1}$ and $\mathcal{O}_{\beta_2}$.
\end{proof}

With the help of the preceding propositions, our main theorem can be proved.

\begin{proof}[Proof of Theorem \ref{main-result}]
The sufficiency follows from Proposition \ref{prop:suff}.
In the following, we focus on the necessity, and recall our hypothesis that $\beta_1, \beta_2 > 1$ are two different non-integers with $\nu_{\beta_1} = \nu_{\beta_2}$.

  From Proposition \ref{prop:0}, the number $0$ is exactly in one of sets $\mathcal{O}_{\beta_1}$ and $\mathcal{O}_{\beta_2}$.
  Without loss of generality, we may assume that $0\in \mathcal{O}_{\beta_1}$ and $0\notin \mathcal{O}_{\beta_2}$.
  Let $m = \min\{ n \ge 1: T_{\beta_1}^{n+1}(1) =0 \}$. Then we have $T_{\beta_1}^n (1)=0$ for all $n > m$.
  Write $x_k = T_{\beta_1}^k (1)$ for $1 \le k \le m$. Note that $x_i\neq x_j$ for all $1 \le i < j \le m$. Thus we have $\mathcal{O}_{\beta_1} \setminus \{0\} = \{ x_1, x_2, \ldots, x_m \}$.
  It follows from (\ref{eq:nonor-den}) that
  \begin{equation}\label{eq:beta-1}
    {h}_{\beta_1}(x) = \mathds{1}_{[0,1)}(x) + \sum_{k=1}^{m} \frac{1}{\beta_1^k} \cdot \mathds{1}_{[0,x_k)}(x).
  \end{equation}

  Note that $0\notin \mathcal{O}_{\beta_2}$. By Proposition \ref{prop:finite}, we have $\mathcal{O}_{\beta_2}= \mathcal{O}_{\beta_1} \setminus \{0\} = \{ x_1, x_2, \ldots, x_m \}$.
  Write $y_k = T_{\beta_2}^k (1)$ for $1 \le k \le m$.
  Then we obtain that $y_i\neq y_j$ for all $1 \le i < j \le m$, and $$\mathcal{O}_{\beta_2} = \{ y_1, y_2, \ldots, y_m \} = \{ x_1, x_2, \ldots, x_m \}.$$
  This means that there exists a unique $\ell\in\{1,\ldots , m\}$ such that $T_{\beta_2}^{m+1}(1) = y_\ell$.
  Consequently, the sequence $\big\{ T_{\beta_2}^n (1) \big\}_{n=1}^\f$ is eventually periodic with period $y_\ell, y_{\ell+1}, \ldots, y_m$.
  By (\ref{eq:nonor-den}) we get that
  \begin{equation}\label{eq:beta-2}
  \begin{split}
    {h}_{\beta_2}(x)
    & = \mathds{1}_{[0,1)}(x) + \sum_{k=1}^{\ell-1} \frac{1}{\beta_2^k} \cdot \mathds{1}_{[0,y_k)}(x) + \sum_{k=\ell}^{m} \bigg( \frac{1}{\beta_2^{k}} \sum_{n=0}^{\f} \frac{1}{\beta_2^{n(m+1-\ell)}} \bigg) \cdot \mathds{1}_{[0,y_k)}(x) \\
    & = \mathds{1}_{[0,1)}(x) + \sum_{k=1}^{\ell-1} \frac{1}{\beta_2^k} \cdot \mathds{1}_{[0,y_k)}(x) + \frac{\beta_2^{m+1-\ell}}{\beta_2^{m+1-\ell} -1} \sum_{k=\ell}^{m}  \frac{1}{\beta_2^{k}} \cdot \mathds{1}_{[0,y_k)}(x).
  \end{split}
  \end{equation}

  According to Proposition \ref{prop:equal}, we have ${h}_{\beta_1}(x)= {h}_{\beta_2}(x)$ for each $x \in [0,1)$.
  By comparing \eqref{eq:beta-1} with \eqref{eq:beta-2}, we draw the conclusion that the sets of coefficients must be equal. That is,
  \begin{equation}\label{eq:coefficient-equal}
    C = C_1 \cup C_2,
  \end{equation}
  where $$ C=\bigg\{ \frac{1}{\beta_1^k}: 1 \le k \le m \bigg\},$$ and  $$C_1= \bigg\{ \frac{1}{\beta_2^k}: 1 \le k \le \ell-1 \bigg\}, \quad C_2=\bigg\{ \frac{\beta_2^{m+1-\ell}}{\beta_2^{m+1-\ell} -1} \cdot \frac{1}{\beta_2^k}: \ell \le k \le m \bigg\}. $$
  By taking $x=0$ in \eqref{eq:beta-1} and \eqref{eq:beta-2}, we obtain that
  \begin{equation}\label{eq:condition-1}
    \sum_{k=0}^{m} \frac{1}{\beta_1^k} = {h}_{\beta_1}(0)= {h}_{\beta_2}(0) =\frac{\beta_2}{\beta_2 -1}.
  \end{equation}
  It follows that $\beta_1 < \beta_2$.

  We first consider the case $m=1$. Then we have $T_{\beta_1}^2(1)=0$, which means that $\beta_1 \cdot T_{\beta_1}(1)\in \N$.
  Let $q$ be the integer part of $\beta_1$, and $p = \beta_1 \cdot T_{\beta_1}(1)$. Clearly, $p,q \in \N$.
  Since $p = \beta_1 \cdot T_{\beta_1}(1) < \beta_1$, we have $p \le q$.
  Note that $T_{\beta_1}(1) = \beta_1 - q$. We conclude that $\beta_1^2 - q \beta_1 - p =0$.
  Finally, by \eqref{eq:condition-1} we obtain that $\beta_2 = \beta_1 +1$.

  For $m \ge 2$, we will derive a contradiction to complete the proof.
  Note that $\beta_1 < \beta_2$. Then we have $1/\beta_1 \not \in C_1$.
  By \eqref{eq:coefficient-equal}, we find that
  \begin{equation}\label{eq:condition-2}
    \frac{1}{\beta_1} = \max C = \max C_2 = \frac{\beta_2^{m+1-\ell}}{\beta_2^{m+1-\ell} -1} \cdot \frac{1}{\beta_2^\ell}.
  \end{equation}
  By applying the fact that $\beta_1 < \beta_2$ again, we obtain $$\frac{1}{\beta_1^2} > \frac{\beta_2^{m+1-\ell}}{\beta_2^{m+1-\ell} -1} \cdot \frac{1}{\beta_2^{\ell+1}}.$$
  This implies that $1/\beta_1^2 \notin C_2$.
  Also by \eqref{eq:coefficient-equal}, we have that $$\frac{1}{\beta_1^2} = \max C_1 = \frac{1}{\beta_2}. $$
  That is,
  \begin{equation}\label{eq:condition-3}
    \beta_2 = \beta_1^2.
  \end{equation}
  From \eqref{eq:condition-2} and \eqref{eq:condition-3}, the sets $C_1$ and $C_2$ can be rewritten as
  $$C_1= \bigg\{ \frac{1}{\beta_1^{2k}}: 1 \le k \le \ell-1 \bigg\} \quad \text{and} \quad C_2=\bigg\{ \frac{1}{\beta_1^{2k+1}} : 0 \le k \le m-\ell \bigg\}. $$
  By considering the minimal value in $C$, we have $\min C = \min C_1$ or $\min C =\min C_2$, which yields that $$m = 2(\ell-1) \quad \text{or} \quad m = 2\ell-1.$$
  No matter what $m$ is, by \eqref{eq:condition-2} and \eqref{eq:condition-3} we always reach the same conclusion that $$\beta_1^{m+1} - \beta_1 - 1=0.$$
  It follows that $1 < \beta_1 < 2$ and hence, $T_{\beta_1}(1) = \beta_1 -1$.
  Note that $0< \beta_1^m \cdot T_{\beta_1}(1) = \beta^{m+1} - \beta_1^m < \beta_1^{m+1} - \beta_1 =1$.
  Thus we have $T_{\beta_1}^{m+1}(1) = \beta_1^m \cdot T_{\beta_1}(1) \in (0,1)$.
  Recall the fact that $T_{\beta_1}^n (1)=0$ for all $n > m$.
  This leads to a contradiction.
  The proof is completed.
\end{proof}

\section*{Acknowledgement}

The authors would like to thank Professor V. Komornik for his helpful discussions for Corollary 1.3. Zhiqiang Wang is supported by the China Postdoctoral Science Foundation (No. 2024M763857), and by the National Natural Science Foundation of China (No. 12471085).

\bibliographystyle{abbrv}

\end{document}